\newtheorem{thm}{Theorem}[section]
\newtheorem{lem}[thm]{Lemma}
\theoremstyle{definition}
\theoremstyle{remark}
\newtheorem{rem}[thm]{Remark}
\numberwithin{equation}{section}
\begin{document}

\title[Skew $N$-Derivations on Semiprime Rings]{Skew $N$-Derivations on Semiprime Rings}%
\author{Xiaowei Xu}%
\address{College of Mathematics,
Jilin University, Changchun 130012,
P.R.China}%
\email{xuxw@jlu.edu.cn}%

\author{Yang Liu}%
\address{College of Mathematics,
Jilin University, Changchun 130012,
P.R.China}%
\email{liuyang2000@jlu.edu.cn}%

\author{Wei Zhang}%
\address{College of Mathematics,
Jilin University, Changchun 130012,
P.R.China}%
\email{107444672@qq.com}%

\thanks{This work was completed with the support of  the NNSF of China (No. 10871023 and No.11071097),
  211 Project, 985 Project and the Basic Foundation for Science Research from Jilin University.}%
\subjclass{16W25; 16N60}%
\keywords{prime ring, semiprime ring, biderivation, $n$-derivation,
skew $n$-derivation.}%

\begin{abstract}
For a ring $R$ with an automorphism $\sigma$, an $n$-additive
mapping $\Delta:R\times R\times\cdots \times R \rightarrow R$  is
called a skew $n$-derivation with respect to $\sigma$ if it is
always a $\sigma$-derivation of $R$ for each argument. Namely, it is
always a $\sigma$-derivation of $R$ for the argument being left once
$n-1$ arguments are fixed by $n-1$ elements in $R$. In this short
note, starting from Bre\v{s}ar Theorems, we prove that a skew
$n$-derivation ($n\geq 3$) on a semiprime ring $R$ must map into the
center of $R$.
\end{abstract}
\maketitle
\section{Introduction}
Let $R$ be a ring with an automorphism $\sigma$. Recall that an
additive mapping $\mu: R\rightarrow R$  is called a
$\sigma$-derivation if $\mu(xy)=\sigma(x)\mu(y)+\mu(x)y$ holds for
all $x,y \in R$. An $n$-additive mapping $$\Delta:R\times
R\times\cdots \times R \rightarrow R$$ (i.e., additive in each
argument) is called a skew $n$-derivation  with respect to $\sigma$
if it is always a $\sigma$-derivation of $R$ for the argument being
left once $n-1$ arguments are fixed by $n-1$ elements in $R$.
Namely, once $a_1,\ldots,a_{i-1},a_{i+1},\ldots,a_n \in R$ are
fixed, then for all $x_i,y_i\in R$, both
$$\Delta(a_1,\ldots,x_i+y_i,\ldots,a_n)=\Delta(a_1,\ldots,x_i,\ldots,a_n)+\Delta(a_1,\ldots,y_i,\ldots,a_n)$$
and
$$\Delta(a_1,\ldots,x_iy_i,\ldots,a_n)=\Delta(a_1,\ldots,x_i,\ldots,a_n)y_i+\sigma(x_i)\Delta(a_1,\ldots,y_i,\ldots,a_n)$$
always hold.

Note that the skew derivation is an ordinary derivation when
$\sigma$ is the identity map $1_R$. Naturally  a skew $n$-derivation
with respect to the identity map $1_R$ is also called an
$n$-derivation.

In order to illustrate  the results  in the literatures focused on
this area clearly we will introduce some concepts related to skew
$n$-derivations although the results in this note  hold for
arbitrary skew $n$-derivations on prime and semiprime rings. A skew
$n$-derivation $\Delta$ is called permuting or symmetric if
$$\Delta(x_1,x_2,\ldots,x_n)=\Delta(x_{\pi(1)},x_{\pi(2)},\ldots,x_{\pi(n)})$$
holds for all $x_1,x_2,\ldots,x_n\in R$ and  $\pi\in S_n$ the
symmetric group of degree $n$. The function $\delta:R\rightarrow R$
defined by $\delta(x)=\Delta(x,x,\ldots,x)$ is called the trace of
$\Delta$. A skew 2-derivation with respect to the automorphism
$\sigma$ is also called a $\sigma$-biderivation.  Naturally a
2-derivation is called a biderivation.  Generalized $n$-derivations
on rings can be defined similarly (see
\cite{bresar-1995-JAlgebra-prime-genera} for the definitions of
generalized biderivations).

A ring $R$ is called prime if $aRb\neq0$ for all $a,b\in
R\backslash\{0\}$. A ring $R$ is called semiprime if $aRa\neq0$ for
all $0\neq a\in R$. For a semiprime ring $R$, denote its extended
centroid by $C$ and its symmetric Martindale ring of quotients by
$Q_s$ (see \cite{book} for reference). Particularly the extended
centroid of a prime ring is a field. Denote the center of $R$ by
$Z(R)$. An automorphism $\sigma$ of a semiprime ring $R$ is called
$X$-inner if there exists  an invertible element $p\in Q_s$ such
that $\sigma(x)=pxp^{-1}$ holds for all $x\in R$. Otherwise $\sigma$
is called $X$-outer. For $a,b\in R$, write the commutator $ab-ba$ of
$a$ and $b$ by $[a,b]$. We will always use the commutator formulas
$[a,bc]=b[a,c]+[a,b]c$ and $[ab,c]=a[b,c]+[a,c]b$ for $a,b,c\in R$.

The notion of a symmetric biderivation had been introduced by Maksa
\cite{Maksa-1980-GlasMS} in 1980. In 1989, Vukman
\cite{Vukman-1989-AequaMath} initiated the research of biderivations
on prime and semiprime rings. He extended classical  Posner Theorem
\cite{Posner-1957-PAMS} to symmetric biderivations in  prime and
semiprime rings. Thereafter many literatures were focused on
biderivations of prime and semiprime rings (see
\cite{Ali-2006-Aligarh-prime,Argac-2006-AlgebraColloq-prime,Argac-1992-SymmSci-prime,Argac-1996-PureAppl-prime,
Ashraf-1999-Rend-prime,Ashraf-1997-8-Aligarh-prime,bresar-1995-JAlgebra-prime,bresar-1995-JAlgebra-prime-genera,
Ceran-2009-Algebras-prime,Deng-1996-JMathResExp-prime,Deng-1997-InternatJMath-prime,Hongan-2000-CommAlg-prime,
Huang-2007-study-prime,Kannappan-1994-Glas-prime,Khan-2003-Southeas-prime,
 Muthanna-2007-Aligarh-prime,Vukman-1990-AequaMath-prime,
WangY-2002-CommAlg-prime,Yenigul-1993-MathJOkayamaMath-prime,zhan-2003-Qufu-prime}
for reference). Among these papers, the most important results are
due to Bre\v{s}ar
\cite{1994-Bresar-ProcAMS,bresar-1995-JAlgebra-prime,bresar-1995-JAlgebra-prime-genera}.
He gave the construction of biderivations on semiprime rings. In
\cite{Fosner-submit-to-algebraCollo} Fo\v{s}ner studied the
structure of generalized $\alpha$-derivations of prime and semiprime
rings.

{\bf Bre\v{s}ar Theorem  (\cite[Theorem 4.1]{1994-Bresar-ProcAMS})}
Let $R$ be a semiprime ring, and let $B:R\times R\rightarrow R$ be a
biderivation. Then there exist an idempotent $\varepsilon\in C$ and
an  element $\mu\in C$ such that the algebra $(1-\varepsilon)R$ is
commutative and $\varepsilon B(x,y)=\mu\varepsilon[x,y]$ for all
$x,y\in R$. Particularly if $R$ is a noncommutative  prime ring,
then there exists $\lambda\in C$ such that $B(x,y)=\lambda[x,y]$ for
all $x,y\in R$.

In \cite{bresar-1995-JAlgebra-prime-genera} skew biderivations and
inner generalized biderivations on prime rings were also
characterized. So almost all results appearing in the literatures
listed above can be implied by Bre\v{s}ar Theorems
\cite{1994-Bresar-ProcAMS,bresar-1995-JAlgebra-prime-genera}.

In 2007, Jung and Park \cite{Jung-2007-BKMS-3} considered permuting
3-derivations on prime and semiprime rings and obtained the
following results:

{\bf Theorem A (Jung and Park, \cite[Theorem
2.3]{Jung-2007-BKMS-3})} Let $R$ be a noncommutative 3-torsion free
semiprime ring and let $I$ be a nonzero two-sided ideal of $R$.
Suppose that there exists a permuting 3-derivation $\Delta:R\times
R\times R\rightarrow R$ such that  $\delta$  is centralizing on $I$
($[\delta(x),x]\in Z(R), x\in I$), where $\delta$ is the trace of
$\Delta$. Then $\delta$ is commuting on $I$ ($[\delta(x),x]=0, x\in
I$).

{\bf Theorem B (Jung and Park, \cite[Theorem
2.4]{Jung-2007-BKMS-3})} Let $R$ be a noncommutative 3!-torsion free
prime ring and let $I$ be a nonzero two-sided ideal of $R$. Suppose
that there exists a nonzero permuting 3-derivation $\Delta:R\times
R\times R\rightarrow R$ such that $\delta$  is centralizing on $I$,
where $\delta$ is the trace of $\Delta$. Then $R$ is commutative.

Park \cite{Park-2007-JKS-4} obtained the similar results for
permuting 4-derivations on prime and semiprime rings. Furthermore in
2009, Park \cite{Park-2009-JCCM-4} considered permuting
$n$-derivations on prime and semiprime rings.

In this short note, starting from Bre\v{s}ar Theorems (\cite[Theorem
3.1 and 4.1]{1994-Bresar-ProcAMS}), we prove that an arbitrary skew
$n$-derivation ($n\geq 3$) on a semiprime ring $R$ must map into the
center of $R$. As a corollary, we obtain that  an arbitrary skew
$n$-derivation ($n\geq 3$) on a noncommutative prime ring $R$ must
be zero. These results can reveal the reason why Theorem A, B and
results in the literatures \cite{Park-2007-JKS-4,Park-2009-JCCM-4}
hold.

\section{Main result}
This short note depends heavily on  Bre\v{s}ar Theorems
\cite[Theorem 3.1 and 4.1]{1994-Bresar-ProcAMS}. In view of their
proofs, we give a very mild modification of these two theorems in
order to apply them better. The proof of \cite[Theorem
3.1]{1994-Bresar-ProcAMS} implies its following form.
\begin{rem} \normalfont{\bf(Bre\v{s}ar, \cite[Theorem 3.1]{1994-Bresar-ProcAMS})} Let $S$ be a set and $R$ be a semiprime ring. If
functions $f$ and $g$ of $S$ into $R$ satisfy that
\begin{center}
$f(s)xg(t)=\xi g(s)xf(t)$ ~for all ~$s,t\in S$, $x\in R$,
\end{center}
where $\xi\in C$ is an invertible element, then there exist
idempotents $\varepsilon_1,\varepsilon_2,\varepsilon_3\in C$ and an
invertible element $\lambda\in C$ such that
$\varepsilon_i\varepsilon_j$, for $i\neq j$,
$\varepsilon_1+\varepsilon_2+\varepsilon_3=1$, and
$$\varepsilon_1f(s)=\lambda\varepsilon_1g(s), ~\varepsilon_2g(s)=0,~\varepsilon_3f(s)=0, ~(1-\xi)\varepsilon_1f(s)=0$$
holds for all $s\in S$.
 \label{remark-bresar-semi}
\end{rem}

\begin{proof} It is a small modification of the proof of \cite[Theorem
3.1]{1994-Bresar-ProcAMS}. Define $\varphi: E\rightarrow R$ by
$$\varphi\Big(\varepsilon_1\Big(\sum_{i=1}^nx_if(s_i)y_i\Big)+(1-\varepsilon_1)r
\Big)=\xi\varepsilon_1\Big(\sum_{i=1}^nx_ig(s_i)y_i\Big)+(1-\varepsilon_1)r,$$
and then add $\xi$ in correspondent formulas of the proof of
\cite[Theorem 3.1]{1994-Bresar-ProcAMS}. At last
$(1-\xi)\varepsilon_1f(S)=0$ can be deduced from
$$\Big((1-\xi)\varepsilon_1f(t)\Big)R\Big((1-\xi)\varepsilon_1f(t)\Big)=0$$ for all $t\in
S$.
\end{proof}

 A modification of  the proof for \cite[Theorem
4.1]{1994-Bresar-ProcAMS} will give the following remark.

\begin{rem} Let $R$ be a semiprime ring with an automorphism $\sigma$, and let
$B:R\times R\rightarrow R$ be a $\sigma$-biderivation. Then there
exist    idempotents $\varepsilon_1,\varepsilon_2,\varepsilon_3\in
C$ and invertible elements $p\in Q_s$, $\lambda\in C$
\label{remark-biderivation} such that\begin{itemize}
\item $\varepsilon_1+\varepsilon_2+\varepsilon_3=1$,
$\varepsilon_1\varepsilon_2=\varepsilon_1\varepsilon_3=\varepsilon_2\varepsilon_3=0$,
\item $\varepsilon_1B(x,y)=\varepsilon_1p[x,y], ~\varepsilon_2B(x,y)=0$,
and $\varepsilon_3[x,y]=0$ for all $x,y\in R$.
\end{itemize}
\end{rem}

\begin{proof} By \cite[Lemma 2.3]{bresar-1995-JAlgebra-prime-genera} we have
that for all $x,y,z,u,v\in R$
\begin{align}
B(x,y)z[u,v]=[\sigma(x),\sigma(y)]\sigma(z)B(u,v).\label{equation-remark-skewbiderivation-lemma2.3}
\end{align}
If $\sigma$ is $X$-outer then  for fixed   $x,y,u,v$ we deduce that
$B(x,y)z[u,v]=[\sigma(x),\sigma(y)]z_1B(u,v)$ holds for all
$z,z_1\in R$ by Kharchenko Theorem (\cite[Theorem
2]{Kharchenko-1991-SMJ-auto-semi}). Moreover $B(x,y)z[u,v]=0$ holds
for all $x,y,z,u,v\in R$. So by \cite[Theorem 2.3.9 and Lemma
2.3.10]{book} there exists an idempotent $\varepsilon\in C$ such
that $\varepsilon B(x,y)=(1-\varepsilon)[x,y]=0$ holds for all
$x,y\in R$. Setting $p=1$, $\varepsilon_1=0$,
$\varepsilon_2=\varepsilon$, $\varepsilon_3=1-\varepsilon$, we get
the conclusion in this case. If $\sigma$ is $X$-inner then there
exists an invertible element $p\in Q_s$ such that
$\sigma(x)=pxp^{-1}$ for all $x\in R$. Now observing
\eqref{equation-remark-skewbiderivation-lemma2.3} we get that for
all $x,y,z,u,v\in R$
$$p^{-1}B(x,y)z[u,v]=[x,y]zp^{-1}B(u,v).$$
Following the proof of  \cite[Theorem 4.1]{1994-Bresar-ProcAMS} we
complete the proof. \end{proof}

\begin{rem} From the proof of Remark \ref{remark-biderivation}
for different   $\sigma$-biderivations $B_1, \ldots,B_t$, the
invertible element $p$ is same when $\sigma$ is $X$-inner. We can
set $p=1$ when $\sigma$ is $X$-outer. So $p$ could be thought as
same for different $\sigma$-biderivations.
\end{rem}

 Now we need some lemmas. Lemma \ref{remark-bresar-ours}
and \ref{lemma1} are used to prove Lemma \ref{lemma-main}.  Lemma
\ref{lemma-main} is crucial in the proof of Theorem
\ref{theorem-main}. The proofs are elementary  computation.
\begin{lem}
Let $R$ be a semiprime ring  and $a\in R$. Then $[a,[a,x]]=0$ holds
for all $x\in R$ if and only if $a^2, ~2a\in Z(R)$.
\label{remark-bresar-ours}
\end{lem}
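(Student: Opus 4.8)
The plan is to translate the hypothesis into a statement about an inner derivation. Put $d=[a,-]$, so that $d$ is an (inner) derivation of $R$ and the assumption ``$[a,[a,x]]=0$ for all $x\in R$'' is precisely $d^{2}=0$. I will show that $d^{2}=0$ forces both $2d(x)=0$ and $[a^{2},x]=0$ for all $x\in R$ (equivalently $2a,a^{2}\in Z(R)$), and that conversely these two conditions give $d^{2}=0$. The converse is the easy direction: from $[a^{2},x]=a\,d(x)+d(x)\,a=0$ we get $d(x)\,a=-a\,d(x)$, hence $[a,d(x)]=a\,d(x)-d(x)\,a=2a\,d(x)=a\bigl(2d(x)\bigr)$, and $2d(x)=[2a,x]=0$ since $2a\in Z(R)$; thus $[a,[a,x]]=0$.

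For the forward implication I would first expand $d$ on a product twice: $d^{2}(xy)=d\bigl(d(x)y+xd(y)\bigr)=d^{2}(x)y+2d(x)d(y)+xd^{2}(y)$, so $d^{2}=0$ yields $2d(x)d(y)=0$ for all $x,y\in R$. Next I linearize this: substituting $y=rx$ and using $d(rx)=d(r)x+rd(x)$ gives $0=2d(x)d(rx)=\bigl(2d(x)d(r)\bigr)x+2d(x)\,r\,d(x)$, and the first summand vanishes by the identity just obtained, so $2d(x)\,r\,d(x)=0$ for all $r,x\in R$. Consequently $\bigl(2d(x)\bigr)\,r\,\bigl(2d(x)\bigr)=4d(x)\,r\,d(x)=2\bigl(2d(x)\,r\,d(x)\bigr)=0$ for all $r$, so semiprimeness of $R$ forces $2d(x)=0$, i.e. $[2a,x]=0$ and $2a\in Z(R)$. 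Finally, rewriting $d^{2}(x)=0$ as $a^{2}x-2axa+xa^{2}=0$, hence $a^{2}x+xa^{2}=2axa$, we obtain $[a^{2},x]=a^{2}x-xa^{2}=2axa-2xa^{2}=2(ax-xa)a=2d(x)\,a$, which is $0$ by the previous step; thus $a^{2}\in Z(R)$.

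The only real subtlety — and the reason the conclusion reads $a^{2},2a\in Z(R)$ rather than the cleaner $a\in Z(R)$ — is the lack of $2$-torsion-freeness, which prevents cancelling the factor $2$. The whole argument must therefore be arranged so that semiprimeness is applied to the element $2d(x)$ rather than to $d(x)$ itself; the identity $2d(x)\,r\,d(x)=0$ is exactly what makes this possible, and after that everything is routine commutator manipulation.
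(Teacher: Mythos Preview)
Your argument is correct and is essentially the paper's own proof rewritten in the notation $d=[a,-]$: both expand the double commutator on a product to obtain $2d(x)d(y)=0$, substitute a product to insert an arbitrary ring element and invoke semiprimeness for $2a\in Z(R)$, and then read off $[a^{2},x]=0$ from $a^{2}x-2axa+xa^{2}=0$ using $2a\in Z(R)$. The only cosmetic difference is that you spell out the converse, which the paper dismisses as obvious.
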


\begin{proof} We only deal with the `` only if " part because the
other part is obvious.  For all $x,y\in R$
\begin{align}\begin{array}{rl}
0=\big[a,[a,xy]\big]&=x\big[a,[a,y]\big]+[a,x][a,y]+[a,x][a,y]+\big[a,[a,x]\big]y\\
&=2[a,x][a,y].
\end{array}
\label{equation-remark2-1}
\end{align}
Putting $x=yz$ in \eqref{equation-remark2-1} and applying
\eqref{equation-remark2-1} we have $[2a,y]R[2a,y]=0$ holds for all
$y\in R$. Then $2a\in Z(R)$ since $R$ is semiprime. Moreover for any
$x\in R$, we obtain
$$0=[a,[a,x]]=a^2x+xa^2-2axa=a^2x+xa^2-x(2a^2)=a^2x-xa^2.$$
So $a^2\in Z(R)$. \end{proof}

\begin{lem}Let $R$ be a semiprime ring with extended centroid $C$ and $a, b\in R$. Then
$[a,[b,x]]=0$ holds for all $x\in R$ if and only if there exist
\label{lemma1}
  idempotents $\varepsilon_1,
\varepsilon_2, \varepsilon_3 \in C$  and  an invertible element
$\lambda \in C$ such that
\begin{itemize}
\item
$\varepsilon_1+\varepsilon_2+\varepsilon_3=1$,
$\varepsilon_1\varepsilon_2=\varepsilon_1\varepsilon_3=\varepsilon_2\varepsilon_3=0$
and
\item $\varepsilon_1a-\lambda\varepsilon_1b, ~\varepsilon_2a,
~\varepsilon_3b, ~2\varepsilon_1b, ~\varepsilon_1b^2 \in C$.
\end{itemize}
\end{lem}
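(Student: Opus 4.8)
The plan is to treat the two implications separately: the ``if'' direction is a routine verification, while the ``only if'' direction carries the substance and is reduced to Remark~\ref{remark-bresar-semi}.

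For ``only if'', I first turn the hypothesis into a multiplicative identity. Expanding $0=[a,[b,xy]]$ with the commutator rules and discarding the two terms killed by $[a,[b,x]]=[a,[b,y]]=0$ leaves $[a,x][b,y]+[b,x][a,y]=0$ for all $x,y\in R$; replacing $x$ by $xz$ and cancelling the leading term (again by this identity, now applied to $z$) gives $[a,x]\,z\,[b,y]=-[b,x]\,z\,[a,y]$ for all $x,y,z\in R$. This is precisely the hypothesis of Remark~\ref{remark-bresar-semi} with $S=R$, $f(x)=[a,x]$, $g(x)=[b,x]$, and $\xi=-1$, which is invertible in $C$. The remark then yields orthogonal idempotents $\varepsilon_1,\varepsilon_2,\varepsilon_3\in C$ summing to $1$ and an invertible $\lambda\in C$ with $\varepsilon_1[a,s]=\lambda\varepsilon_1[b,s]$, $\varepsilon_2[b,s]=0$, $\varepsilon_3[a,s]=0$ and $2\varepsilon_1[a,s]=0$ for all $s\in R$. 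Using that an element of $Q_s$ commuting with all of $R$ must lie in $C$, the first three relations translate to $\varepsilon_1a-\lambda\varepsilon_1b,\ \varepsilon_2b,\ \varepsilon_3a\in C$; multiplying $2\varepsilon_1[a,s]=0$ by $\lambda^{-1}$ and then applying the first relation gives $2\varepsilon_1[b,s]=0$, i.e. $2\varepsilon_1b\in C$.

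The remaining conclusion $\varepsilon_1b^2\in C$ is not supplied directly by Remark~\ref{remark-bresar-semi}, and extracting it is the step I expect to be the main obstacle: one has to feed the structural information back into the original identity. Concretely, multiply $[a,[b,x]]=0$ by $\varepsilon_1$ and put $s=[b,x]$ into $\varepsilon_1[a,s]=\lambda\varepsilon_1[b,s]$ to get $\lambda\varepsilon_1[b,[b,x]]=0$, hence $\varepsilon_1[b,[b,x]]=0$; expanding the double commutator gives $\varepsilon_1(b^2x+xb^2-2bxb)=0$, and since $2\varepsilon_1b$ is central we have $2\varepsilon_1bxb=2\varepsilon_1xb^2$, so the identity collapses to $[\varepsilon_1b^2,x]=0$ for all $x\in R$, whence $\varepsilon_1b^2\in C$.

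For the converse I write $1=\varepsilon_1+\varepsilon_2+\varepsilon_3$ and verify $\varepsilon_i[a,[b,x]]=0$ for each $i$. The cases $i=2$ and $i=3$ are immediate from $\varepsilon_2a\in C$ and $\varepsilon_3b\in C$ respectively. For $i=1$, writing $\varepsilon_1a=\lambda\varepsilon_1b+c$ with $c=\varepsilon_1a-\lambda\varepsilon_1b\in C$ reduces $\varepsilon_1[a,[b,x]]$ to $\lambda\varepsilon_1[b,[b,x]]$, which vanishes by the same expansion as above using $2\varepsilon_1b,\varepsilon_1b^2\in C$; summing the three cases yields $[a,[b,x]]=0$. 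Apart from the $\varepsilon_1b^2$ step, the only things to watch are the bookkeeping between $C$ and $Q_s$ and the check that $\xi=-1$ really is a unit of $C$, so that Remark~\ref{remark-bresar-semi} applies.
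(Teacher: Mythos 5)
Your proof is correct and follows essentially the same route as the paper: linearize $[a,[b,xy]]=0$ to the identity $[a,x]z[b,y]=-[b,x]z[a,y]$ and apply Remark~\ref{remark-bresar-semi} with $\xi=-1$; the only cosmetic discrepancy is that your idempotents satisfy $\varepsilon_2 b,\ \varepsilon_3 a\in C$, so $\varepsilon_2$ and $\varepsilon_3$ must be swapped to match the statement as written. The paper gets $2\varepsilon_1 b,\ \varepsilon_1 b^2\in C$ by noting $\varepsilon_1[b,[b,x]]=0$ and citing Lemma~\ref{remark-bresar-ours} for $\varepsilon_1 b$, while you re-derive the same facts inline (using the clause $(1-\xi)\varepsilon_1 f(s)=0$ for $2\varepsilon_1 b$); both variants are valid.
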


\begin{proof} The `` if " part can be checked by direct computation.
Now we consider the `` only if " part. For any $x,y\in R$
\begin{align}\begin{array}{rl}
0=[a,[b,xy]]&=x\big[a,[b,y]\big]+[a,x][b,y]+[b,x][a,y]+\big[a,[b,x]\big]y\\
&=[a,x][b,y]+[b,x][a,y].
\end{array}
\label{equation-lemma0-1}
\end{align}
 Putting $x=xz$ in
\eqref{equation-lemma0-1} and applying \eqref{equation-lemma0-1} we
have that
\begin{align}
[a,x]z[b,y]+[b,x]z[a,y]=0 \label{equation-lemma0-main}
\end{align}
holds for all $x,y,z\in R$. By Remark \ref{remark-bresar-semi} there
exist idempotents $\varepsilon_1, \varepsilon_2, \varepsilon_3 \in
C$ and an invertible element  $\lambda\in C$ such that
\begin{itemize}
\item $\varepsilon_1+\varepsilon_2+\varepsilon_3=1$,
$\varepsilon_1\varepsilon_2=\varepsilon_1\varepsilon_3=\varepsilon_2\varepsilon_3=0$,
\item $\varepsilon_1[a,x]=\lambda\varepsilon_1[b,x]$, $\varepsilon_2[a,x]=0$
and $\varepsilon_3[b,x]=0$ for all $x\in R$.
\end{itemize}
That is $\varepsilon_1a-\lambda\varepsilon_1b, ~\varepsilon_2a,
~\varepsilon_3b \in C$. Then for all $x\in R$
$[\varepsilon_1b,[\varepsilon_1b,x]]=0$ since $\lambda$ is
invertible. By Lemma \ref{remark-bresar-ours} we obtain
$2\varepsilon_1b, ~\varepsilon_1b^2 \in C$. \end{proof}

\begin{lem}Let $R$ be a semiprime ring with extended centroid $C$ and $a, b\in R$. Then
$[[a,x],[b,x]]=0$ holds for all $x\in R$ if and only if there exist
an idempotent $\varepsilon \in C$ and an element $\zeta \in C$ such
that  $\varepsilon a-\zeta\varepsilon b, (1- \varepsilon)b\in C$.
\label{lemma-main}
\end{lem}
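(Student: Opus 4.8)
The plan is to mimic the proof of Lemma \ref{lemma1}, linearising the hypothesis to produce an identity of the shape handled by Remark \ref{remark-bresar-semi}, and then to extract the idempotent and the central elements. First I would expand $0=[[a,xy],[b,xy]]$ using the two commutator formulas. Writing $[a,xy]=x[a,y]+[a,x]y$ and $[b,xy]=x[b,y]+[b,x]y$, the product $[[a,xy],[b,xy]]$ breaks into four brackets; the terms in which both ``slots'' carry the same factor ($x\cdot x$ or $y\cdot y$) collapse using the hypothesis applied to $y$ and to $x$ respectively, and the cross terms should leave an identity of the form
\begin{align*}
[a,x]y\,[b,y]z - [b,x]y\,[a,y]z + (\text{lower-order terms}) = 0,
\end{align*}
roughly speaking; the precise bookkeeping is routine but must be done carefully. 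The goal of this step is to reach, after a further substitution $x\mapsto xz$ (or $y\mapsto zy$) and cancellation of the already-known lower-order parts, a ``sandwiched'' identity
\begin{align*}
[a,x]\,u\,[b,y] = \eta\,[b,x]\,u\,[a,y]\quad\text{for all }x,y,u\in R,
\end{align*}
possibly only after first splitting $R$ by an idempotent coming from Lemma \ref{lemma1} applied to a commutator that the linearisation throws up. This is the step I expect to be the main obstacle: unlike in Lemma \ref{lemma1}, here the hypothesis is a single commutator (degree two in $x$), so the linearisation is messier, and it is not a priori clear that one lands exactly in the template of Remark \ref{remark-bresar-semi} with an \emph{invertible} scalar $\xi$ — one may first have to peel off a direct summand on which things degenerate.

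Once a sandwiched identity $[a,x]u[b,y]=\eta[b,x]u[a,y]$ is in hand with $\eta\in C$ invertible, I would invoke Remark \ref{remark-bresar-semi} with $f(x)=[a,x]$, $g(x)=[b,x]$ and $\xi=\eta$. This yields orthogonal idempotents $\varepsilon_1,\varepsilon_2,\varepsilon_3\in C$ summing to $1$ with $\varepsilon_1[a,x]=\lambda\varepsilon_1[b,x]$, $\varepsilon_2[a,x]=0$, $\varepsilon_3[b,x]=0$ for all $x\in R$, for some invertible $\lambda\in C$. Thus $\varepsilon_1a-\lambda\varepsilon_1 b\in C$ and $\varepsilon_2 a\in C$, while $\varepsilon_3 b\in C$; note $(\varepsilon_1+\varepsilon_3)b$ need not be central yet. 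To finish, set $\varepsilon:=\varepsilon_1$ and $\zeta:=\lambda$, so $\varepsilon a-\zeta\varepsilon b\in C$ as required; it remains to show $(1-\varepsilon_1)b\in C$. On the $\varepsilon_3$-part this is immediate. On the $\varepsilon_2$-part, $\varepsilon_2 a\in C$ forces $[a,\varepsilon_2 x]=0$, so the original identity $[[a,x],[b,x]]=0$ is vacuous there and gives no control on $\varepsilon_2 b$ — which signals that the clean statement must really have absorbed the $\varepsilon_2$-summand into the $\varepsilon_1$-summand already, i.e. the correct reading is that on the part where $a$ is central one takes $\zeta=0$ and merges it into $\varepsilon$. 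Concretely I would re-bundle: replace $\varepsilon_1$ by $\varepsilon_1+\varepsilon_2$ and $\zeta$ by the element of $C$ acting as $\lambda$ on $\varepsilon_1 R$ and as $0$ on $\varepsilon_2 R$ (using that $C$ is a commutative ring in which orthogonal idempotents let one glue scalars), so that $\varepsilon a-\zeta\varepsilon b\in C$ still holds and now $(1-\varepsilon)b=\varepsilon_3 b\in C$. This gives the ``only if'' direction; the ``if'' direction is the direct check that if $\varepsilon a-\zeta\varepsilon b$ and $(1-\varepsilon)b$ are central then $[[a,x],[b,x]]=\varepsilon[[a,x],[b,x]]=[[\zeta\varepsilon b,x],[\varepsilon b,x]]=\zeta[[\varepsilon b,x],[\varepsilon b,x]]=0$.

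One subtlety I would flag and handle explicitly: in the linearisation, squared-slot terms produce not just $[[a,x],[b,x]]$ but also mixed expressions such as $[[a,x],[b,y]]+[[a,y],[b,x]]$, which is the full polarisation of the hypothesis and vanishes, but \emph{single} mixed brackets $[[a,x],[b,y]]$ need not vanish; so the identity I extract will be symmetric in $x,y$ and I must keep it so, only breaking symmetry at the substitution step $x\mapsto xz$. If at that point the scalar multiplying the right-hand side fails to be invertible on all of $R$, I would first apply Lemma \ref{lemma1} (to the pair $(a,b)$, whose hypothesis $[a,[b,x]]=0$ may or may not hold) — more precisely, I expect the linearised identity, before sandwiching, to already be of the form $[a,x][b,y]+[b,x][a,y]=0$ up to a central twist on a summand, so that Lemma \ref{lemma1} applies verbatim on that summand and the genuinely new content lives on the complementary summand, where the sandwiched identity with invertible $\xi$ does hold. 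Assembling the idempotents from Lemma \ref{lemma1} and from Remark \ref{remark-bresar-semi} by the usual orthogonal-refinement of finitely many idempotents in $C$ then yields the single $\varepsilon$ of the statement.
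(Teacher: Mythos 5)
Your proposal defers exactly the steps where the real work lies, and the route you sketch is not the one that actually closes. The linearised hypothesis is $\big[[a,x],[b,y]\big]+\big[[a,y],[b,x]\big]=0$, which contains terms in both multiplication orders; the substitution $x\mapsto xz$ applied to it does not produce a one-sided sandwiched identity $[a,x]u[b,y]=\eta[b,x]u[a,y]$, and you never actually derive one — you only say you ``expect'' to reach it after bookkeeping you do not perform. The paper's proof has to take a different intermediate path: first it proves $[a,b]=0$, by substituting $x=xb$ into the linearised identity, then $y=b$ and $y=a$, extracting $\big[[a,b],b\big]=0$ and $[a,b]\in Z(R)$ by semiprimeness, and finally killing $[a,b]$ via $\big[[a,ab],[b,ab]\big]=0$, which forces $[a,b]^3=0$. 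This fact is indispensable: it converts $\big[[a,y],b\big]$ into $-\big[a,[b,y]\big]$, and only then does one obtain a Bre\v{s}ar-type identity — not between $[a,x]$ and $[b,x]$ as you hope, but between the \emph{double} commutators $f(x)=\big[a,[b,x]\big]$ and $g(x)=\big[b,[b,x]\big]$. Your plan contains no mechanism for producing $[a,b]=0$, and without it the symmetrisation you rely on is unavailable.

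Moreover, even granting the Bre\v{s}ar step, the output is not the clean trichotomy you describe for $[a,x]$, $[b,x]$ (with $\varepsilon_2 a\in C$, $\varepsilon_3 b\in C$); what one gets is $\big[a-\xi\omega_1 b,[b,x]\big]=0$ for all $x$, after first showing $\omega_3\big[a,[b,y]\big]=0$ by another substitution into the sandwiched identity. One must then apply Lemma \ref{lemma1} to the pair $(a-\xi\omega_1 b,\,b)$ — not to $(a,b)$, whose hypothesis, as you yourself note, ``may or may not hold'' — and only then assemble $\varepsilon=\varepsilon_1+\varepsilon_2$ and $\zeta=\lambda\varepsilon_1+\xi\omega_1(\varepsilon_1+\varepsilon_2)$. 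Your endgame (gluing scalars across orthogonal idempotents, and the ``if'' direction) is fine, but the two load-bearing ingredients — the proof that $[a,b]=0$ and the correct identification of which functions the Bre\v{s}ar theorem is applied to — are missing, so the proposal as written is a plan with the essential gaps unfilled rather than a proof.
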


\begin{proof} The `` if " part is obvious. Now we deal with the ``
only if " part. Firstly, we will prove $[a,b]=0$. For any $x,y\in
R$, we get $ \big[[a,x+y],[b,x+y]\big]=0. $ Then for any $x,y\in R$
\begin{align}
\big[[a,x],[b,y]\big]+\big[[a,y],[b,x]\big]=0.\label{linearized}
\end{align}
Put $x=xb$ in \eqref{linearized}. Then for any $x,y\in R$
\begin{align*}
\big[x[a,b]+[a,x]b,[b,y]\big]+\big[[a,y],[b,x]b\big]=0.
\end{align*}
That is for any $x,y\in R$
\begin{align*}\begin{array}{rl}
x\big[[a,b],[b,y]\big]+\big[x,[b,y]\big][a,b]&+[a,x]\big[b,[b,y]\big]+\big[[a,x],[b,y]\big]b\\
&+[b,x]\big[[a,y],b\big]+\big[[a,y],[b,x]\big]b=0.
\end{array}\end{align*}
Then by \eqref{linearized} for any $x,y\in R$
\begin{align}
x\big[[a,b],[b,y]\big]+\big[x,[b,y]\big][a,b]&+[a,x]\big[b,[b,y]\big]
+[b,x]\big[[a,y],b\big]=0. \label{equation-main}
\end{align}
Put $y=b$ in \eqref{equation-main}. Then for any $x\in R$
\begin{align}
[b,x]\big[[a,b],b\big]=0. \label{equation-lemma1-1}
\end{align}
Putting $x=xy$ in \eqref{equation-lemma1-1} and applying
\eqref{equation-lemma1-1} we have that
\begin{align}
[b,x]y\big[[a,b],b\big]=0\label{equation-lemma1-2}
\end{align}
holds for all $x,y\in R$. Putting $x=-[a,b]$ in
\eqref{equation-lemma1-2}, we get that
$$\big[[a,b],b\big]y\big[[a,b],b\big]=0$$ holds for all $y\in R$. Then
$\big[[a,b],b\big]=0$ since $R$ is a semiprime ring. Putting $y=a$
into \eqref{equation-main} and applying $\big[[a,b],b\big]=0$ we
obtain that
\begin{align}
\big[x,[b,a]\big][a,b]=0\label{equation-lemma1-3}
\end{align}
holds for all $x\in R$. Putting $x=xy$ into
\eqref{equation-lemma1-3} and applying \eqref{equation-lemma1-3} we
get that $ \big[x,[b,a]\big]y[a,b]=0 $ holds for all $x,y\in R$.
Particularly  $ \big[x,[b,a]\big]y\big[x,[b,a]\big]=0$ for any
$x,y\in R$. Then $[a,b]\in Z(R)$ since $R$ is  semiprime. By
$\big[[a,ab],[b,ab]\big]=0$ and $[a,b]\in Z(R)$ we find
$-[a,b]^3=0$. Then $[a,b]=0$ since $[a,b]\in Z(R)$ and $R$ is
semiprime.

Review \eqref{equation-main}  then for any $x,y\in R$
\begin{align}
[a,x]\big[b,[b,y]\big] +[b,x]\big[[a,y],b\big]=0.
\label{equation-main-1}
\end{align}
Putting $x=xz$ in \eqref{equation-main-1} and applying
\eqref{equation-main-1} we obtain that
\begin{align}
[a,x]z\big[b,[b,y]\big] +[b,x]z\big[[a,y],b\big]=0
\label{equation-main-4}
\end{align}
holds for all $x,y,z\in R$. Putting $x=[b,x]$ in
\eqref{equation-main-4} and applying
$\big[[a,y],b\big]=-\big[a,[b,y]\big]$ (because of $[a,b]=0$), we
have that
$$\big[a,[b,x]\big]z\big[b,[b,y]\big] =\big[b,[b,x]\big]z\big[a,[b,y]\big]$$
holds for all $x,y,z\in R$. Then by Bre\v{s}ar Theorem \cite[Theorem
3.1]{1994-Bresar-ProcAMS} there exist idempotents
$\omega_1,\omega_2,\omega_3\in C$ and an invertible element $\xi\in
C$ such that
\begin{itemize}
\item $\omega_1+\omega_2+\omega_3=1$,
$\omega_1\omega_2=\omega_1\omega_3=\omega_2\omega_3=0$,
\item $\omega_1[a,[b,x]]=\xi\omega_1[b,[b,x]]$, $\omega_2[a,[b,x]]=0$
and $\omega_3[b,[b,x]]=0$  for all $x\in R$.
\end{itemize}
 Putting $x=-[a,y]$ in
\eqref{equation-main-4} and then multiplying  it by $\omega_3$, we
get that $$\omega_3\big[[a,y],b\big]z\omega_3\big[[a,y],b\big]=0$$
holds for all $y,z\in R$. So
$\omega_3\big[a,[b,y]\big]=-\omega_3\big[[a,y],b\big]=0$ since $R$
is semiprime. Hence \begin{center}
$\big[\omega_1a-\xi\omega_1b,[b,x]\big]=0$ and
$\big[(1-\omega_1)a,[b,x]\big]=0$
\end{center}
hold for all $x\in R$. Thus $\big[a-\xi\omega_1b,[b,x]\big]=0$ holds
for all $x\in R$. Then by Lemma \ref{lemma1} there exist
  idempotents $\varepsilon_1,
\varepsilon_2, \varepsilon_3 \in C$  and  an invertible element
$\lambda \in C$ such that
\begin{itemize}
\item $\varepsilon_1+\varepsilon_2+\varepsilon_3=1$,
$\varepsilon_1\varepsilon_2=\varepsilon_1\varepsilon_3=\varepsilon_2\varepsilon_3=0$
and
\item $\varepsilon_1(a-\xi\omega_1b)-\lambda\varepsilon_1b=c_1, ~\varepsilon_2(a-\xi\omega_1b)=c_2,
~\varepsilon_3b, ~2\varepsilon_1b, ~\varepsilon_1b^2 \in C$.
\end{itemize}
Then $$(\varepsilon_1+\varepsilon_2)a=
(\lambda\varepsilon_1+\xi\omega_1(\varepsilon_1+\varepsilon_2))(\varepsilon_1+\varepsilon_2)b+c_1+c_2
.$$ Setting $\varepsilon=\varepsilon_1+\varepsilon_2$ and
$\zeta=\lambda\varepsilon_1+\xi\omega_1(\varepsilon_1+\varepsilon_2)$,
we complete the proof.
\end{proof}

\begin{thm}
A skew $n$-derivation ($n\geq3$)  on a semiprime ring $R$ must map
into the center of $R$. \label{theorem-main}
\end{thm}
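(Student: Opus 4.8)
The plan is to reduce the $n$-ary problem to the $2$-ary case by fixing all but two of the arguments and invoking the $\sigma$-biderivation structure from Remark \ref{remark-biderivation}, and then to exploit the interplay between different choices of the fixed arguments via Lemma \ref{lemma-main}. Let $\Delta$ be a skew $n$-derivation with respect to an automorphism $\sigma$, with $n\geq 3$. First I would fix $a_3,\ldots,a_n\in R$ and observe that $B_{a_3,\ldots,a_n}(x,y):=\Delta(x,y,a_3,\ldots,a_n)$ is a $\sigma$-biderivation; by Remark \ref{remark-biderivation} (and the following remark, which lets us take the same invertible $p\in Q_s$ for all these biderivations) there are orthogonal idempotents $\varepsilon_1,\varepsilon_2,\varepsilon_3\in C$ summing to $1$ with $\varepsilon_1 B(x,y)=\varepsilon_1 p[x,y]$, $\varepsilon_2 B(x,y)=0$, $\varepsilon_3[x,y]=0$. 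The goal is to show $\Delta(x_1,\ldots,x_n)\in Z(R)$ for all arguments, equivalently $[\Delta(x_1,\ldots,x_n),r]=0$ for all $r\in R$.

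The key structural input is that $\Delta$ is \emph{simultaneously} a $\sigma$-derivation in each slot, so the ``$[x,y]$-type'' behavior forced in slots $1,2$ must be compatible with the same behavior forced in slots $1,3$, slots $2,3$, etc. Concretely, I would next use the third argument: for fixed $b,c$ and fixed $a_4,\ldots,a_n$, the map $\Delta(x,b,z,a_4,\ldots,a_n)$ is a $\sigma$-derivation in $x$ and in $z$, so expanding $\Delta(x, b, [\,\cdot\,,\,\cdot\,],\ldots)$ and comparing with the first-slot expansion produces an identity of the shape $[\,[\,\alpha,x\,],[\,\beta,x\,]\,]=0$ (after peeling off $p$ and using that $[x,y]$ annihilates the right pieces on the $\varepsilon_3$-component), for suitable $\alpha,\beta$ built from $\Delta$-values. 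This is precisely the hypothesis of Lemma \ref{lemma-main}, which then yields an idempotent $\varepsilon\in C$ and $\zeta\in C$ with $\varepsilon\alpha-\zeta\varepsilon\beta,\ (1-\varepsilon)\beta\in C$. Iterating this across the remaining coordinates and across the orthogonal pieces, and using the semiprimeness of $R$ together with Lemma \ref{remark-bresar-ours} to control the square and double of the relevant elements, should collapse every component of every $\Delta$-value into $C=Z(Q_s)\cap\cdots$, hence into $Z(R)$.

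I would organize the argument as: (i) set up the biderivation in slots $1,2$ and record the three-idempotent decomposition; (ii) on the $\varepsilon_1$-part, where $B(x,y)=p[x,y]$ up to scalars, use slot $3$ to derive a commuting-type relation and apply Lemma \ref{lemma-main}; (iii) on the $\varepsilon_2$- and $\varepsilon_3$-parts, where either $B$ or $[x,y]$ already vanishes, handle the reduction more directly (here $n\geq 3$ is essential because there is a genuine third slot to play against the first two); (iv) reassemble via $\varepsilon_1+\varepsilon_2+\varepsilon_3=1$ and conclude $[\Delta(x_1,\ldots,x_n),R]=0$.

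The main obstacle I anticipate is step (ii): turning the bookkeeping of ``$\sigma$-derivation in slot $1$'' versus ``$\sigma$-derivation in slot $3$'' into a clean commutator identity in the presence of the automorphism $\sigma$ and the element $p$. One must be careful that the $p$'s (and $\sigma$'s) introduced on the left versus on the right of the products match up — this is exactly what the remark after Remark \ref{remark-biderivation} is for — and that the scalars $\lambda,\xi,\zeta,\ldots$ produced by successive applications of Remarks \ref{remark-bresar-semi}, \ref{remark-biderivation} and Lemmas \ref{lemma1}, \ref{lemma-main} can be amalgamated over the orthogonal idempotent pieces without circularity. Once that combinatorial/ring-of-quotients bookkeeping is handled, the passage to $Z(R)$ is routine from semiprimeness.
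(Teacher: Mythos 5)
Your overall strategy --- fix all but two slots to get $\sigma$-biderivations, invoke Remark \ref{remark-biderivation} with the common invertible $p$, play two different choices of fixed slot against each other until Lemma \ref{lemma-main} applies, then reassemble with central idempotents --- is indeed the paper's strategy, and you are right that the remark guaranteeing a common $p$ is what makes the slots comparable. But the proposal stops exactly where the proof has to be carried out, and the one concrete mechanism you suggest is not the one that works. The needed identity is not obtained by expanding the Leibniz rule in slot $3$ versus slot $1$, and it does not involve elements ``built from $\Delta$-values.'' After reducing to a skew $3$-derivation and fixing $x_0,y_0,z_0$, one evaluates the single element $\Delta(x_0,y_0,z)$ through the two biderivations $\varphi_{x_0}(y,z)=\Delta(x_0,y,z)$ and $\varphi_{y_0}(x,z)=\Delta(x,y_0,z)$, multiplies by $\varepsilon_{x_0}\varepsilon_{y_0}$ and cancels the common $p$, obtaining $\lambda_{x_0}\varepsilon_{x_0}\varepsilon_{y_0}[y_0,z]=\lambda_{y_0}\varepsilon_{x_0}\varepsilon_{y_0}[x_0,z]$ for all $z\in R$. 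This yields $\lambda_{x_0}\varepsilon_{x_0}\varepsilon_{y_0}\big[[y_0,z],[x_0,z]\big]=0$, so Lemma \ref{lemma-main} is applied to the pair of \emph{fixed arguments} $y_0,x_0$ (not to $\Delta$-values), and what one extracts from its conclusion is the vanishing $\varepsilon_{x_0}\varepsilon_{y_0}[x_0,y_0]=0$ (using invertibility of $\lambda_{x_0}$), hence $\varepsilon_{x_0}\varepsilon_{y_0}\varepsilon_{z_0}\Delta(x_0,y_0,z_0)=0$. In your write-up the statement ``produces an identity of the shape $[[\alpha,x],[\beta,x]]=0$'' is asserted rather than derived, and the $\alpha,\beta$ you name are the wrong objects; this is the genuine gap.

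The endgame is also not routine in the form you describe. The target is not that $\Delta$-values ``collapse into $C$''; it is that they commute with $R$, and the paper achieves this by exhibiting explicit complementary idempotents $\varepsilon_1+\varepsilon_2=1$, assembled from all nine idempotents $\varepsilon_t,\varepsilon_t',\varepsilon_t''$ with $t\in\{x_0,y_0,z_0\}$, such that $\varepsilon_1\Delta(x_0,y_0,z_0)=0$ while $\varepsilon_2[x,y]=0$ for all $x,y\in R$; the relation $\varepsilon_{x_0}\varepsilon_{y_0}\varepsilon_{z_0}\Delta(x_0,y_0,z_0)=0$ is precisely what makes this bookkeeping close up. Your steps (iii)--(iv) gesture at such a reassembly but give no construction, and Lemma \ref{remark-bresar-ours} plays no role at that stage (it is used only inside the proofs of Lemmas \ref{lemma1} and \ref{lemma-main}). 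So the toolkit and the architecture are correctly identified, but the decisive cross-slot computation and the idempotent reassembly --- the actual content of the proof --- are missing from the proposal.
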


\begin{proof} Let $\Delta$ be a skew $n$-derivation on $R$ with
respect to the automorphism $\sigma$. Then for  fixed
$a_1,\ldots,a_n\in R$ we obtain
$\Delta(a_1,\ldots,a_n)=\Delta_1(a_1,a_2,a_3)$ where
$\Delta_1(x,y,z)=\Delta(x,y,z,a_4,\ldots,a_n)$ is a skew
3-derivation  with respect to  $\sigma$. So it is sufficient to
prove that every skew 3-derivation on $R$ must map into the center
of $R$. Let $\Delta:R\times R\times R\rightarrow R$ be a skew
3-derivation with respect to  $\sigma$. For fixed $x_0, y_0, z_0\in
R$, we proceed to prove that $\Delta(x_0,y_0,z_0)\in Z(R)$.
Obviously
\begin{center}
$\Delta(x_0,y,z)=\varphi_{x_0}(y,z)$,
~$\Delta(x,y_0,z)=\varphi_{y_0}(x,z)$ ~and
~$\Delta(x,y,z_0)=\varphi_{z_0}(x,y)$
\end{center}
are all $\sigma$-biderivations on $R$. Then by Remark
\ref{remark-biderivation} for every $t\in \{x_0,y_0,z_0\}$ there
exist idempotents $\varepsilon_t, \varepsilon_t', \varepsilon_t''\in
C$ and  invertible elements $p\in Q_s$, $\lambda_t\in C$ such that
\begin{itemize}
\item $\varepsilon_t+\varepsilon_t'+\varepsilon_t''=1$,
$\varepsilon_t\varepsilon_t'=\varepsilon_t\varepsilon_t''=\varepsilon_t'\varepsilon_t''=0$,
\item $\varepsilon_t\varphi_{t}(r,s)=\lambda_t\varepsilon_tp[r,s]$,
$\varepsilon_t'\varphi_{t}(r,s)=0$ and $\varepsilon_t''[r,s]=0$ for
all $r,s\in R$.
\end{itemize}
So for all $z\in R$, we obtain
$$\varepsilon_{x_0}\Delta(x_0,y_0,z)=\lambda_{x_0}\varepsilon_{x_0}p[y_0,z]
~~\text{and}
~~\varepsilon_{y_0}\Delta(x_0,y_0,z)=\lambda_{y_0}\varepsilon_{y_0}p[x_0,z].$$
Then for all $z\in R$, we have
$$\lambda_{x_0}\varepsilon_{x_0}\varepsilon_{y_0}[y_0,z]=
\varepsilon_{x_0}\varepsilon_{y_0}p^{-1}\Delta(x_0,y_0,z)=\lambda_{y_0}\varepsilon_{x_0}\varepsilon_{y_0}[x_0,z].$$
Hence for all $z\in R$, we get
$\lambda_{x_0}\varepsilon_{x_0}\varepsilon_{y_0}\big[[y_0,z],[x_0,z]\big]=0$.
Then by Lemma \ref{lemma-main}  we obtain
$\lambda_{x_0}\varepsilon_{x_0}\varepsilon_{y_0}[x_0,y_0]=0$. Thus
$\varepsilon_{x_0}\varepsilon_{y_0}[x_0,y_0]=0$ since
$\lambda_{x_0}$ is invertible. Then
$$\varepsilon_{x_0}\varepsilon_{y_0}\varepsilon_{z_0}\Delta(x_0,y_0,z_0)
=\varepsilon_{x_0}\varepsilon_{y_0}(\lambda_{z_0}\varepsilon_{z_0}p[x_0,y_0])=
\lambda_{z_0}\varepsilon_{z_0}p(\varepsilon_{x_0}\varepsilon_{y_0})[x_0,y_0]=0.$$
Set
$$\left\{\begin{array}{lcl}\varepsilon_1&=&\varepsilon_{x_0}\varepsilon_{y_0}(1-\varepsilon_{z_0}'')
+\varepsilon_{x_0}'(1-\varepsilon_{y_0}')+\varepsilon_{y_0}',\\
\varepsilon_2&=&\varepsilon_{x_0}(\varepsilon_{y_0}\varepsilon_{z_0}''+\varepsilon_{y_0}'')
+\varepsilon_{x_0}''(1-\varepsilon_{y_0}').
\end{array}\right.$$
It  can be verified from direct computation that $\varepsilon_1,
\varepsilon_2\in C$ are idempotents such that
\begin{itemize}
\item $\varepsilon_1+\varepsilon_2=1$,
\item $\varepsilon_1\Delta(x_0,y_0,z_0)=0$ ~and ~$\varepsilon_2[x,y]=0$ for all $x,y\in R$.
\end{itemize}
So for all $w\in R$, we have
$$[\Delta(x_0,y_0,z_0),w]=\varepsilon_1[\Delta(x_0,y_0,z_0),w]+\varepsilon_2[\Delta(x_0,y_0,z_0),w]=0.$$
Then $\Delta(x_0,y_0,z_0)\in Z(R)$ completes the proof. \end{proof}

 By Theorem \ref{theorem-main} and \cite[Theorem 3.2]{bresar-1995-JAlgebra-prime-genera} we get the following
result for prime rings.
\begin{thm}
A  prime ring with a nonzero skew $n$-derivation ($n\geq3$)
 must be commutative.
\end{thm}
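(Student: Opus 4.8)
The plan is to combine Theorem~\ref{theorem-main} with the fact that the extended centroid of a prime ring is a field. So suppose $R$ is a prime ring carrying a nonzero skew $n$-derivation $\Delta$ with respect to an automorphism $\sigma$, and assume, towards a contradiction, that $R$ is noncommutative. By Theorem~\ref{theorem-main} we already know that $\Delta$ maps into $Z(R)$.

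First I would fix $a_2,\dots,a_n\in R$ and look at $\mu\colon R\to R$, $\mu(x)=\Delta(x,a_2,\dots,a_n)$. By the definition of a skew $n$-derivation, $\mu$ is a $\sigma$-derivation of $R$, and by Theorem~\ref{theorem-main} its range lies in $Z(R)$. Thus it is enough to prove that a noncommutative prime ring admits no nonzero $\sigma$-derivation with central range: granting this, $\mu\equiv 0$ for every choice of $a_2,\dots,a_n$, hence $\Delta=0$, a contradiction, so $R$ must be commutative. (Alternatively, one can appeal to the prime-ring form of the structure theorem for $\sigma$-biderivations, \cite[Theorem~3.2]{bresar-1995-JAlgebra-prime-genera}: over a field-valued extended centroid the idempotents of Remark~\ref{remark-biderivation} are each $0$ or $1$, so that theorem reads ``$R$ is commutative or the $\sigma$-biderivation is zero'', and feeding this into the argument of Theorem~\ref{theorem-main} yields the same dichotomy for $\Delta$.)

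To settle the central-range claim, let $\mu$ be a $\sigma$-derivation with $\mu(R)\subseteq Z(R)$ and suppose $\mu\neq 0$. Applying $\mu$ to a product $zw$ and commuting the result with an arbitrary $r\in R$, the centrality of $\mu(z)$, $\mu(w)$, $\mu(zw)$ gives $\mu(w)[\sigma(z),r]+\mu(z)[w,r]=0$ for all $z,w,r\in R$. Putting $w=z$ yields $\mu(z)[\sigma(z)+z,r]=0$; since a nonzero central element of a prime ring is not a zero divisor, a short additivity argument based on a fixed $a$ with $\mu(a)\neq 0$ forces $\sigma(z)+z\in Z(R)$, hence $[\sigma(z),r]=-[z,r]$, for every $z,r\in R$. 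Substituting this back leaves $\mu(z)[w,r]=\mu(w)[z,r]$ for all $z,w,r$. Fixing $a$ with $c:=\mu(a)\neq 0$ and taking $r=a$ gives $c[w,a]=0$, so $[w,a]=0$ for all $w$, that is $a\in Z(R)$; then $[a,r]=0$, the identity collapses to $c[w,r]=0$, and $[w,r]=0$ for all $w,r\in R$ --- contradicting the noncommutativity of $R$.

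I do not expect a genuine obstacle here: Theorem~\ref{theorem-main} has already done the hard analytic work, and what remains is the essentially formal observation that, with a field as extended centroid, the idempotent decompositions degenerate, so the dichotomy holds pointwise and hence globally. The only point needing a little care is the bookkeeping about zero divisors --- repeatedly using that in a prime ring every nonzero central element (in particular each nonzero value of $\mu$) is cancellable.
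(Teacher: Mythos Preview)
Your argument is correct, and the alternative you sketch in the parenthetical is essentially what the paper actually does. The paper's own proof freezes all but \emph{two} arguments of $\Delta$ to obtain a nonzero $\sigma$-biderivation $\Delta_1(x,y)=\Delta(x,y,a_3,\dots,a_n)$, then invokes \cite[Theorem~3.2]{bresar-1995-JAlgebra-prime-genera} together with Theorem~\ref{theorem-main} to conclude that $p[x,y]\in Z(R)$ for some invertible $p\in Q_s$; a short commutator computation with $[x,yx]=[x,y]x$ then yields $[x,y]R[x,z]=0$ and hence commutativity.

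Your main line is genuinely different and more self-contained: you freeze all but \emph{one} argument, land on a $\sigma$-derivation $\mu$ with central range, and show by direct computation (no appeal to the external structure theorem for $\sigma$-biderivations) that such a $\mu$ must vanish on a noncommutative prime ring. The linearization step you gloss over --- replacing $z$ by $z+a$ in $\mu(z)[\sigma(z)+z,r]=0$ to upgrade from ``$\sigma(a)+a\in Z(R)$ for this one $a$'' to ``$\sigma(z)+z\in Z(R)$ for every $z$'' --- works exactly as you indicate once one cancels the nonzero central $\mu(a)$. What you gain is an argument that stays entirely within the paper and elementary prime-ring facts; what the paper's route gains is brevity, since the heavy lifting is outsourced to Bre\v{s}ar's classification of $\sigma$-biderivations.
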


\begin{proof} Let $\Delta$ be a nonzero  skew $n$-derivation
($n\geq3$) on a noncommutative prime ring $R$ with respect to an
automorphism $\sigma$. Then there exist $a_3,\ldots,a_n\in R$ such
that $\Delta_1(x,y)=\Delta(x,y,a_3,\ldots,a_n)$ is a nonzero
$\sigma$-biderivation on $R$. Then by  Theorem \ref{theorem-main}
and \cite[Theorem 3.2]{bresar-1995-JAlgebra-prime-genera} there
exists  an invertible element $p\in Q_s$ such that $[p[x,y],z]=0$
holds for all $x,y,z\in R$.  Particularly for all $x,y,z\in R$
$$0=[p[x,yx],z]=[p[x,y]x,z]=p[x,y][x,z].$$
Moreover for all $x,y,z\in R$ we have $[x,y]R[x,z]=0$ since $p$ is
invertible. So $R$ is commutative since $R$ is prime. \end{proof}
\bibliographystyle{amsplain}

\end{document}